\theoremstyle{plain}
\newtheorem{theorem}{Theorem}
\newtheorem{lemma}{Lemma}
\newtheorem{corollary}{Corollary}
\theoremstyle{definition}
\theoremstyle{remark}
\numberwithin{equation}{section}
\begin{document}
\title[Iterated Arithmetic Functions]{A Note about Iterated Arithmetic Functions}
\author{Colin Defant}
\address{University of Florida\\ Department of Mathematics\\ 1400 Stadium Rd.\\ 32611 Gainesville, FL\\ United States}

\email{cdefant@ufl.edu}

\begin{abstract}
Let $f\colon\mathbb{N}\rightarrow\mathbb{N}_0$ be a multiplicative arithmetic function such that for all primes $p$ and positive integers $\alpha$, $f(p^{\alpha})<p^{\alpha}$ and $f(p)\vert f(p^{\alpha})$. Suppose also that any prime that divides $f(p^{\alpha})$ also divides $pf(p)$. Define $f(0)=0$, and let $H(n)=\displaystyle{\lim_{m\rightarrow\infty}f^m(n)}$, where $f^m$ denotes the $m^{th}$ iterate of $f$. We prove that the function $H$ is completely multiplicative.   
\end{abstract}

\subjclass{11A25, 11N64}

\keywords{Arithmetic function, iteration, multiplicative, Euler totient, Schemmel totient}

\maketitle

\section{Introduction}
The study of iterated arithmetic functions, especially functions related to the Euler totient function $\varphi$, has burgeoned over the past century. In 1943, H. Shapiro's monumental work on a function $C(n)$, which counts the number of iterations of $\varphi$ needed for $n$ to reach $2$, paved the way for subsequent number-theoretic research \cite{shapiro43}. In this paper, we study a problem concerning the limiting behavior of iterations of functions related to the Euler totient function. 

Throughout this paper, we let $\mathbb{N}$, $\mathbb{N}_0$, and $\mathbb{P}$ denote the set of positive integers, the set of nonnegative integers, and the set of prime numbers, respectively. We will let $f\colon\mathbb{N}_0\rightarrow\mathbb{N}_0$ be a multiplicative arithmetic function which has the following properties for all primes $p$ and positive integers $\alpha$. 
\begin{enumerate}[I.]
\item $f(p^{\alpha})<p^{\alpha}$.
\item $f(p)\vert f(p^{\alpha})$. 
\item If $q$ is prime and $q\vert f(p^{\alpha})$, then $q\vert pf(p)$. 
\item $f(0)=0$. 
\end{enumerate} 
First, note that property IV does not effectively restrict the choice of $f$. Indeed, we may let $f$ be any multiplicative arithmetic function that satisfies properties I, II, and III and then simply define $f(0)=0$. One class of arithmetic functions which satisfy I, II, and III are the Schemmel totient functions. For each positive integer $r$, the Schemmel totient function $S_r$ is a multiplicative arithmetic function which satisfies 
\[S_r(p^{\alpha})=\begin{cases} 0, & \mbox{if } p\leq r; \\ p^{\alpha-1}(p-r), & \mbox{if } p>r \end{cases}\] 
for all primes $p$ and positive integers $\alpha$ \cite{schemmel69}. These interesting generalizations of the Euler totient function have applications in the study of magic squares \cite[page 184]{Sandor04} and in the enumeration of cliques in certain graphs \cite{Defant14B}. 
\par
Because $f$ is multiplicative, properties I and II of $f$ are equivalent to the following properties, which we will later reference.
\begin{enumerate}[A.] 
\item For all integers $n>1$, $f(n)<n$.
\item If $p$ is a prime divisor of a positive integer $n$, then $f(p)\vert f(n)$. 
\end{enumerate} 
\par 
Let $f^0(n)=n$ and $f^{k+1}(n)=f(f^k(n))$ for all nonnegative integers $k$ and $n$. 
Observe that, for any $n\in\mathbb{N}$, $f^n(n)\in\{0,1\}$. Furthermore, $f^n(n)=\displaystyle{\lim_{m\rightarrow\infty}f^m(n)}$, so we will define $\displaystyle{H(n)=\lim_{m\rightarrow\infty}f^m(n)}$. The author has shown that the function $H\colon\mathbb{N}\rightarrow\{0,1\}$ is completely multiplicative for the case in which $f$ is a Schemmel totient function \cite{Defant14A}. Our purpose is to prove that $H$ is completely multiplicative for any choice of a multiplicative arithmetic function $f$ that satisfies properties I, II, III, and IV. To help do so, we define the following sets.
\[P=\{p\in\mathbb{P}\colon H(p)=1\}\]
\[Q=\{q\in\mathbb{P}\colon H(q)=0\}\]
\[S=\{n\in\mathbb{N}\colon q\nmid n\:\forall\: q\in Q\}\]
We define $T$ to be the unique set of positive integers defined by the following criteria:
\begin{itemize}
\item $1\in T$.

\item If $p$ is prime, then $p\in T$ if and only if $f(p)\in T$.

\item If $x$ is composite, then $x\in T$ if and only if there exist $x_1,x_2\in T$ such that $x_1,x_2>1$ and $x_1x_2=x$.
\end{itemize} 
Note that $T$ is a set of \emph{positive} integers; in particular, $0\not\in T$. We may now establish a couple of lemmas that should make the proof of the desired theorem relatively painless. 
\begin{lemma} \label{Lem1.1}
Let $k\in\mathbb{N}$. If all the prime divisors of $k$ are in $T$, then all the positive divisors of $k$ (including $k$) are in $T$. Conversely, if $k\in T$, then every positive divisor of $k$ is an element of $T$.
\end{lemma}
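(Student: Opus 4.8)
The plan is to reduce both halves of the lemma to a single structural description of $T$. Let $P_T = T\cap\mathbb{P}$ be the set of primes lying in $T$, and let $U$ denote the set of positive integers all of whose prime divisors belong to $P_T$ (so $1\in U$ vacuously). I claim that $T=U$; once this is known, both assertions of the lemma follow immediately. Indeed, if every prime divisor of $k$ is in $T$, then each such prime lies in $P_T$, so $k\in U=T$; moreover every positive divisor $d$ of $k$ has its prime divisors among those of $k$, whence $d\in U=T$, giving the first statement. Conversely, if $k\in T=U$, then the prime divisors of $k$, and therefore those of any divisor $d\mid k$, all lie in $P_T\subseteq T$, so again $d\in U=T$, giving the converse.

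To prove $U\subseteq T$, I would argue by strong induction on $n\in U$. The case $n=1$ holds since $1\in T$ by definition, and if $n$ is prime then $n\in P_T\subseteq T$. If $n$ is composite, fix a prime divisor $p\mid n$; then $p\in P_T\subseteq T$, and $n/p<n$ has all of its prime divisors in $P_T$, so $n/p\in T$ by the induction hypothesis. Since $p,\,n/p>1$ and $p\cdot(n/p)=n$, the defining property of $T$ for composite numbers yields $n\in T$.

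For the reverse inclusion $T\subseteq U$, I would induct along the recursive construction of $T$, i.e.\ again by strong induction on $n\in T$. The values $n=1$ and $n$ prime are handled exactly as above (vacuously, and because $n\in P_T$, respectively). If $n$ is composite and $n\in T$, then by definition there exist $x_1,x_2\in T$ with $x_1,x_2>1$ and $x_1x_2=n$; since $x_1,x_2<n$, the induction hypothesis gives $x_1,x_2\in U$, and because the prime divisors of $n=x_1x_2$ are precisely those of $x_1$ together with those of $x_2$, every prime divisor of $n$ lies in $P_T$, so $n\in U$.

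I expect the only genuine subtlety to be in the converse direction. A naive attempt to prove it directly, taking $k=x_1x_2\in T$ and trying to write an arbitrary divisor $d\mid k$ as a product of a divisor of $x_1$ and a divisor of $x_2$, runs into trouble when $x_1$ and $x_2$ share prime factors, since divisors of $k$ need not factor compatibly with the splitting $k=x_1x_2$. Passing through the prime-level description $T=U$ sidesteps this difficulty entirely, because it reduces every membership question to a condition on prime divisors, which behaves well under both taking divisors and taking products.
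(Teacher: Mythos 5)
Your proposal is correct and is essentially the paper's own argument in cleaner packaging: your inclusion $U\subseteq T$ is the paper's first part (building a number up from its primes in $T$ via the composite criterion), and your inclusion $T\subseteq U$ is the induction the paper runs on $\Omega(k)$ (decomposing $k=k_1k_2$ with $k_1,k_2\in T$ and collecting prime divisors). Isolating the characterization $T=U$ as an explicit claim is a nice organizational improvement, but the underlying inductions and the use of the defining criteria of $T$ are the same as in the paper.
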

\begin{proof}
First, suppose that all the prime divisors of $k$ are in $T$, and let $d$ be a positive divisor of $k$. Then all the prime divisors of $d$ are in $T$. Let $\displaystyle{d=\prod_{i=1}^rp_i^{\alpha_i}}$ be the canonical prime factorization of $d$. As $p_1\in T$, the third defining criterion of $T$ tells us that $p_1^2\in T$. Then, by the same token, $p_1^3\in T$. Eventually, we find that $p_1^{\alpha_1}\in T$. As $p_1^{\alpha_1},p_2\in T$, we have $p_1^{\alpha_1}p_2\in T$. Repeatedly using the third criterion, we can keep multiplying by primes until we find that $d\in T$. This completes the first part of the proof. Now we will prove that if $k\in T$, then every positive divisor of $k$ is an element of $T$. The proof is trivial if $k$ is prime, so suppose $k$ is composite. We will induct on $\Omega(k)$, the number of prime divisors (counting multiplicities) of $k$. If $\Omega(k)=2$, then, by the third defining criterion of $T$, the prime divisors of $k$ must be elements of $T$. Therefore, if $\Omega(k)=2$, we are done. Now, suppose the result holds whenever $\Omega(k)\leq h$, where $h>1$ is an integer. Consider the case in which $\Omega(k)=h+1$. By the third defining criterion of $T$, we can write $k=k_1k_2$, where $1<k_1,k_2<k$ and $k_1,k_2\in T$. By the induction hypothesis, all of the positive divisors of $k_1$ and all of the positive divisors of $k_2$ are in $T$. Therefore, all of the prime divisors of $k$ are in $T$. By the first part of the proof, we conclude that all of the positive divisors of $k$ are in $T$.
\end{proof}
\begin{lemma} \label{Lem1.2}
The sets $S$ and $T$ are equal. 
\end{lemma}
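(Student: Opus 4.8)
The plan is to reduce the set identity $S=T$ to a statement about primes, and then prove that prime statement by strong induction while carrying the limiting values $H$ along. By Lemma~\ref{Lem1.1}, a positive integer lies in $T$ if and only if all of its prime divisors lie in $T$; by definition, a positive integer lies in $S$ if and only if none of its prime divisors lies in $Q$, i.e.\ all of its prime divisors lie in $P$. Hence both $S$ and $T$ are determined by which primes they contain, so it suffices to show that for every prime $p$ one has $p\in T$ if and only if $H(p)=1$ (equivalently $p\in P$). Since property~A gives $f(p)<p$, and any prime dividing $f(p)$ is at most $f(p)<p$, this statement is well suited to strong induction.

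Before the induction I would record two structural facts. First, a dichotomy forced by properties II and III: if $f(p^{\alpha})=0$ for some $\alpha$, then \emph{every} prime divides $f(p^{\alpha})$, so by property~III every prime divides $pf(p)$, which forces $pf(p)=0$ and hence $f(p)=0$; conversely $f(p)=0$ together with property~II gives $0\mid f(p^{\alpha})$, so $f(p^{\alpha})=0$. Calling $p$ a \emph{zero prime} when $f(p)=0$, it follows that $f(n)=0$ exactly when $n$ is divisible by a zero prime, and otherwise the orbit $n,f(n),f^2(n),\dots$ strictly decreases to $1$ by property~A. Second, $H(f(n))=\lim_m f^m(f(n))=\lim_m f^{m+1}(n)=H(n)$ for every $n$; in particular every zero prime lies in $Q$.

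The heart of the matter is the claim that for every $m\ge 1$ one has $H(m)=1$ if and only if every prime divisor of $m$ lies in $P$, which I would prove by strong induction on $m$. For the forward-building direction, assume every prime divisor $q_i$ of $m$ lies in $P$; then no $q_i$ is a zero prime, so $f(m)\neq 0$. Any prime $r\mid f(m)$ divides some $f(q_i^{\beta_i})$, so by property~III $r\mid q_i f(q_i)$; if $r=q_i$ then $r\in P$, while if $r\mid f(q_i)$ then, since $H(f(q_i))=H(q_i)=1$ and $f(q_i)<m$, the induction hypothesis applied to $f(q_i)$ gives $r\in P$. Thus every prime divisor of $f(m)<m$ lies in $P$, and a second use of the induction hypothesis yields $H(m)=H(f(m))=1$. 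For the converse I would argue contrapositively: if some prime divisor $q_j$ of $m$ lies in $Q$ and is a zero prime then $f(m)=0$ and $H(m)=0$; otherwise $H(f(q_j))=H(q_j)=0$ forces (by the induction hypothesis on $f(q_j)<m$) a prime divisor $r\in Q$ of $f(q_j)$, whence property~B gives $f(q_j)\mid f(m)$ and so $r\mid f(m)$, and the induction hypothesis applied to $f(m)$ (or the relation $f(m)=0$) makes $H(m)=H(f(m))=0$.

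With this claim established, the prime statement follows quickly: for a prime $p$, Lemma~\ref{Lem1.1} gives $p\in T\iff f(p)\in T\iff$ every prime divisor of $f(p)$ lies in $T$; each such divisor is smaller than $p$, so by induction it lies in $T$ exactly when it lies in $P$, whence $p\in T\iff f(p)\in S\iff H(f(p))=1\iff H(p)=1$, with the case $f(p)=0\notin T$ matching $H(p)=0$. Thus $S$ and $T$ contain the same primes and, both being determined by their primes, $S=T$. The main obstacle I anticipate is precisely the central claim that membership of $H(m)=1$ is governed by the prime divisors of $m$: because $f$ is only multiplicative while its iterates $f^k$ are not, the orbit of a composite does not split into the orbits of its prime-power components, so the equivalence cannot be read off factor by factor. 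The induction sidesteps this by advancing a single step along the orbit, using property~B to push a bad prime divisor from $f(q_j)$ up into $f(m)$ and property~III to control which new primes can appear in $f(m)$, with $f(p)<p$ guaranteeing termination.
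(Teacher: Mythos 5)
Your proof is correct, but it takes a genuinely different route from the paper's. The paper runs a single strong induction on $m$ that establishes $k\in S\iff k\in T$ for all $k\le m$ at once; its inner step (``for $k<m$, $k\in S\iff f(k)\in S$'') is proved by contradiction, using the induction hypothesis to shuttle between $S$ and $T$ and leaning on the recursive structure of $T$ via Lemma \ref{Lem1.1}, and only afterwards does the paper extract the characterization $H(k)=1\iff k\in S$ as Corollary \ref{Cor1.2}. You invert this order: your central claim ($H(m)=1$ iff every prime divisor of $m$ lies in $P$) \emph{is} Corollary \ref{Cor1.2}, and you prove it first by an induction that never mentions $T$, replacing the paper's iterated chain $f(m)\in S\iff f^2(m)\in S\iff\cdots\iff f^m(m)\in S$ with the cleaner shift-invariance $H(f(n))=H(n)$, while the engine of the inductive step is the same as the paper's (property III to control which primes can appear in $f(m)$, property B to push a bad prime of $f(q_j)$ into $f(m)$, property A for descent). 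You then identify $T$ with $S$ by a short separate induction on primes. What your route buys: the dynamical statement about $H$ is self-contained, so Corollaries \ref{Cor1.1} and \ref{Cor1.2} come for free, and your explicit zero-prime dichotomy ($f(p^{\alpha})=0$ for some $\alpha$ iff $f(p)=0$, via properties II and III) treats the value $0$ systematically where the paper handles it ad hoc inside contradiction arguments. What the paper's route buys: one induction does everything, with no need to formulate the $H$-characterization in advance. Two harmless slips to fix: the equivalence $p\in T\iff f(p)\in T$ is the second defining criterion of $T$, not Lemma \ref{Lem1.1}; and your side remark that when $n$ has no zero-prime divisor the orbit of $n$ ``strictly decreases to $1$'' is unjustified, since the orbit can still reach $0$ later (e.g.\ if $f(3)=2$ and $f(2)=0$, which is compatible with properties I--IV, then $3\mapsto 2\mapsto 0$) --- but you never use that remark, so the proof is unaffected.
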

\begin{proof}
First, note that $1\in S\cap T$. Let $m>1$ be an integer such that,
for all $k\in \{1,2,\ldots,m-1\}$, either $k\in S\cap T$ or $k\not\in S\cup T$. We will show that $m\in S$ if and only
if $m\in T$. First, we must show that if $k\in \{1,2,\ldots,m-1\}$, then $k\in S$ if and only if $f(k)\in S$. Suppose, by way of contradiction, that $f(k)\in S$ and $k\not\in S$. As $k\not\in S$, we have that $k>1$ and $k\not\in T$. Lemma \ref{Lem1.1} then
guarantees that there exists a prime $q$ such that $q\vert k$ and $q\not\in T$. As $q\not\in T$, the second defining criterion of $T$ implies that $f(q)\not\in T$. As $f(k)\in S$, $f(k)\neq 0$. By property B of $f$, $f(q)\vert f(k)$, so $f(q)\neq 0$. Therefore, $f(q)\in \{1,2,\ldots,m-1\}$, and $f(q)\not\in T$. By the induction hypothesis, $f(q)\not\in S$. Therefore, there exists some $q_0\in Q$ such that $q_0\vert f(q)$. Thus, $q_0\vert f(q)\vert f(k)$, which contradicts the assumption that $f(k)\in S$. 
\par 
Conversely, suppose that $f(k)\not\in S$ and $k\in S$. The fact that $f(k)\not\in S$ implies that $k>1$, and the fact that $k\in S$ implies (by the induction hypothesis) that $k\in T$. By Lemma \ref{Lem1.1}, all prime divisors of $k$ are elements of $T$. The second criterion defining $T$ then implies that $f(p)\in T$ for all prime divisors $p$ of $k$. Using Lemma \ref{Lem1.1} again, we conclude that, for any prime divisor $p$ of $k$, all prime divisors of $f(p)$ are in $T$. By property III of $f$, all prime divisors of $f(k)$ are elements of $T$. Therefore, Lemma \ref{Lem1.1} guarantees that $f(k)\in T$. From property A of $f$ and the fact that $0\not\in T$, we see that $f(k)\in \{1,2,\ldots,m-1\}$. The induction hypothesis then implies that $f(k)\in S$, which is a contradiction. Thus, we have established that if $k\in \{1,2,\ldots,m-1\}$, then $k\in S$ if and only if $f(k)\in S$.
\par 
We are now ready to establish that $m\in S$ if and only if $m\in T$. Assume, first, that $m$ is prime. By the second criterion defining $T$, $m\in T$ if and only if $f(m)\in T$. By the induction hypothesis and property A of $f$, $f(m)\in T$ if and only if $f(m)\in S$. From the preceding argument, we see that $f(m)\in S$ if and only if $f^2(m)\in S$. Similarly, $f^2(m)\in S$ if and only if $f^3(m)\in S$. Continuing this pattern, we eventually find that $m\in T$ if and only if $f^m(m)\in S$. Observe that $f^m(m)=H(m)$ and that $0\not\in S$ and $1\in S$. Hence, $m\in T$ if and only if $H(m)=1$. Because $m$ is prime, $H(m)=1$ if and only if $m\not\in Q$. Finally, it follows from the definition of $S$ that $m\not\in Q$ if and only if $m\in S$. This completes the proof of the case in which $m$ is prime. 
\par 
Assume, now, that $m$ is composite. By Lemma \ref{Lem1.1}, $m\in T$ if and only if all the prime divisors of $m$ are in $T$. Because $m$ is composite, all the prime divisors of $m$ are elements of $\{1,2,\ldots,m-1\}$. Therefore, by the induction hypothesis, all the prime divisors of $m$ are in $T$ if and only if all the prime divisors of $m$ are in $S$. It should be clear from the definition of $S$ that all the prime divisors of $m$ are in $S$ if and only if $m\in S$. Hence, $m\in T$ if and only if $m\in S$. 
\end{proof} 
We may now use the sets $S$ and $T$ interchangeably. In addition, part of the above proof gives rise to the following corollary.
\begin{corollary} \label{Cor1.1}
Let $k,r\in \mathbb{N}$. Then $f^r(k)\in S$ if and only if $k\in S$.
\end{corollary}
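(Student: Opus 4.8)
The plan is to extract from the proof of Lemma~\ref{Lem1.2} the \emph{one-step} equivalence stating that for every $n\in\mathbb{N}$, we have $n\in S$ if and only if $f(n)\in S$, and then to iterate it. Indeed, the first part of each inductive step in the proof of Lemma~\ref{Lem1.2} establishes precisely that $k\in S$ if and only if $f(k)\in S$ for all $k\in\{1,2,\ldots,m-1\}$. Since the induction there ranges over all integers $m>1$, every positive integer $n$ lies in such a range (take $m=n+1$), so the equivalence holds for \emph{all} $n\in\mathbb{N}$. I would also observe that it holds trivially when $n=0$, since then $f(0)=0$ and $0\not\in S$, making both statements false.

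With this one-step equivalence in hand, I would prove the corollary by induction on $r$. The base case $r=1$ is exactly the one-step equivalence. For the inductive step, assume that $f^r(k)\in S$ if and only if $k\in S$. Writing $f^{r+1}(k)=f(f^r(k))$ and applying the one-step equivalence to the nonnegative integer $f^r(k)$ shows that $f^{r+1}(k)\in S$ if and only if $f^r(k)\in S$. Chaining this with the induction hypothesis yields that $f^{r+1}(k)\in S$ if and only if $k\in S$, which completes the induction.

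The main point requiring care is the legitimacy of isolating the one-step equivalence as a statement valid for all positive integers, rather than merely as an auxiliary claim internal to a single inductive step of Lemma~\ref{Lem1.2}. Because that claim is re-derived for every $m$, with its range $\{1,\ldots,m-1\}$ growing without bound, it does indeed hold universally, so no real obstacle remains. The only other subtlety is that $f^r(k)$ may vanish during iteration; extending the one-step equivalence to $n=0$, where both $0$ and $f(0)=0$ lie outside $S$, removes the need for any case analysis.
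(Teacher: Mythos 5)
Your proof is correct and follows essentially the same route as the paper's: both extract the one-step equivalence ($k\in S$ iff $f(k)\in S$, valid for all $k$ since the range $\{1,\ldots,m-1\}$ in Lemma~\ref{Lem1.2} can be taken arbitrarily large) and then iterate it $r$ times. Your explicit treatment of the case $f^r(k)=0$ is a small point of added care that the paper leaves implicit, but it does not change the argument.
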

\begin{proof}
The proof follows from the argument in the above proof that $f(k)\in S$ if and only if $k\in S$ whenever $k\in \{1,2,\ldots,m-1\}$. As we now know that we can make $m$ as large as we need, it follows that $f(k)\in S$ if and only if $k\in S$. Repeating this argument, we see that $f^2(k)\in S$ if and only if $f(k)\in S$. The proof then follows from repeated application of the same argument.  
\end{proof}
\begin{corollary} \label{Cor1.2}
For any positive integer $k$, $H(k)=1$ if and only if $k\in S$.
\end{corollary}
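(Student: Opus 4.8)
The plan is to obtain the statement as an immediate consequence of Corollary \ref{Cor1.1}, together with the elementary observations that $1\in S$ while $0\notin S$. First I would recall that, by the remarks preceding the definition of $H$, we have $H(k)=f^k(k)$ and $f^k(k)\in\{0,1\}$ for every $k\in\mathbb{N}$. Thus $H(k)$ is always either $0$ or $1$, and the whole question reduces to deciding which of these two values occurs.

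Next, I would apply Corollary \ref{Cor1.1} with $r=k$, which gives $f^k(k)\in S$ if and only if $k\in S$; equivalently, $H(k)\in S$ if and only if $k\in S$. It therefore suffices to show that the condition $H(k)\in S$ is equivalent to $H(k)=1$, after which chaining the two equivalences produces the desired statement $H(k)=1\iff k\in S$.

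For this last equivalence I would simply note that $S$ is a set of \emph{positive} integers containing $1$ (vacuously, since no prime divides $1$, so no $q\in Q$ divides $1$) but not containing $0$ (since $0\notin\mathbb{N}$). Because $H(k)$ can take only the values $0$ and $1$, the membership $H(k)\in S$ holds precisely when $H(k)=1$. Combining this with the equivalence from Corollary \ref{Cor1.1} completes the argument.

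I do not anticipate any genuine obstacle here: the substantive work has already been carried out in establishing Lemma \ref{Lem1.2} and Corollary \ref{Cor1.1}, and the present corollary is a short bookkeeping argument built on top of them. The only point requiring minor care is the verification that $1\in S$ and $0\notin S$, which is exactly where the restriction of $S$ to positive integers is used.
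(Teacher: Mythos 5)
Your proposal is correct and follows essentially the same route as the paper: the paper also observes that $H(k)=1$ if and only if $H(k)\in S$ (using $H(k)\in\{0,1\}$, $1\in S$, and $0\notin S$) and then applies Corollary \ref{Cor1.1} with $r=k$. Your version merely spells out these elementary verifications in more detail.
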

\begin{proof}
It is clear that $H(k)=1$ if and only if $H(k)\in S$. Therefore, the proof follows immediately from setting $r=k$ in Corollary \ref{Cor1.1}.
\end{proof}

Notice that Corollary \ref{Cor1.2}, Lemma \ref{Lem1.2}, and the defining criteria of $T$ provide a simple recursive means of constructing the set of positive integers $x$ that satisfy $H(x)=1$. We also have the following theorem. 
\begin{theorem} \label{Thm1.3}
The function $H\colon\mathbb{N}\rightarrow\{0,1\}$ is completely multiplicative. 
\end{theorem}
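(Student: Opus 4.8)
The plan is to leverage Corollary \ref{Cor1.2}, which reduces every question about $H$ to a question about membership in $S$, and then to exploit the fact that $S$ is defined by a condition on prime divisors that is transparently compatible with multiplication. Since $H$ takes values only in $\{0,1\}$, the identity $H(mn)=H(m)H(n)$ that defines complete multiplicativity is equivalent to the single biconditional
\[
H(mn)=1 \quad\Longleftrightarrow\quad H(m)=1 \text{ and } H(n)=1,
\]
because the product $H(m)H(n)$ equals $1$ precisely when both factors equal $1$ and equals $0$ otherwise, so that the equation $H(mn)=H(m)H(n)$ holds exactly when the two sides agree on whether they vanish. So I would fix arbitrary $m,n\in\mathbb{N}$ and aim to prove this biconditional.

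Next I would invoke Corollary \ref{Cor1.2} to translate each of the three conditions into a statement about $S$: we have $H(mn)=1$ if and only if $mn\in S$, $H(m)=1$ if and only if $m\in S$, and $H(n)=1$ if and only if $n\in S$. Thus the theorem reduces to showing
\[
mn\in S \quad\Longleftrightarrow\quad m\in S \text{ and } n\in S.
\]

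Finally I would unwind the definition of $S$. By definition, a positive integer lies in $S$ exactly when none of its prime divisors belongs to $Q$. Since the set of prime divisors of $mn$ is the union of the set of prime divisors of $m$ and the set of prime divisors of $n$, the product $mn$ has no prime divisor in $Q$ if and only if neither $m$ nor $n$ has a prime divisor in $Q$. This is precisely the desired equivalence, which completes the proof.

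I do not expect a genuine obstacle here, since the substantive content has already been absorbed into Lemma \ref{Lem1.2} and Corollaries \ref{Cor1.1} and \ref{Cor1.2}, which together identify the set $\{k\in\mathbb{N}\colon H(k)=1\}$ with the multiplicatively transparent set $S$. The only point requiring minor care is the first reduction: one must observe that, for a $\{0,1\}$-valued function, complete multiplicativity is equivalent to the biconditional displayed above. Once the characterization $H(k)=1\iff k\in S$ is in hand, the remainder is an immediate consequence of the way prime divisors behave under multiplication.
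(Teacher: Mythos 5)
Your proposal is correct and follows essentially the same route as the paper's own proof: invoke Corollary \ref{Cor1.2} to identify $H$ with the characteristic function of $S$, then observe that $mn\in S$ if and only if $m\in S$ and $n\in S$. You simply spell out in more detail the two steps the paper treats as immediate (the reduction of complete multiplicativity for a $\{0,1\}$-valued function to a biconditional, and the behavior of prime divisors under products).
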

\begin{proof}
Corollary \ref{Cor1.2} tells us that $H$ is the characteristic function of the set $S$ of positive integers that are not divisible by primes in $Q$. If $x,y\in\mathbb{N}$, then it is clear that $xy\in S$ if and only if $x\in S$ and $y\in S$. The proof follows immediately. 
\end{proof}

\end{document}